\newtheorem{theorem}{Theorem}[section]
\newtheorem{lemma}[theorem]{Lemma}
\newtheorem{proposition}[theorem]{Proposition}
\newtheorem{corollary}[theorem]{Corollary}
\theoremstyle{definition}
\newtheorem{definition}[theorem]{Definition}
\theoremstyle{remark}
\newtheorem{remark}[theorem]{Remark}
\numberwithin{equation}{section}
\newcommand{\BB}[1]{\mathbb{#1}}
\newcommand{\BS}[1]{\boldsymbol{#1}}
\newcommand{\COMP}{\raisebox{0.3ex}{\hspace{0.3ex}%
   {$\scriptstyle{\circ}$}\hspace{0.5ex}}}   
\newcommand{\HEAD}[3]{\put(#1,#2)%
   {\makebox(0,0){\rotatebox{#3}%
   {\raisebox{.5ex}{$\blacktriangleright\phantom{i}$}}}}}
\newcommand{\WHEAD}[3]{\put(#1,#2)%
   {\makebox(0,0){\rotatebox{#3}%
   {\raisebox{.5ex}{$\rhd\phantom{i}$}}}}}
\newcommand{\ORA}[1]{\overrightarrow{#1}}
\newcommand{\PLL}{\mathsf{PLL}}
\newcommand{\UL}{\underline}
\begin{document}

\title{Sufficiency of simplex inequalities}


\author{Shuzo Izumi}
\address{Research Center of Quantum Computing,  
Kinki University, Higashi-Osaka 577-8502, Japan}
\curraddr{}
\email{sizmsizm@gmail.com}
\thanks{}

\subjclass[2010]{Primary 51M16}

\date{\today}

\dedicatory{}

\commby{Daniel Ruberman}

\begin{abstract}
Let $z_0,\dots,z_n$ be the $(n-1)$-dimensional volumes of facets 
of an $n$-simplex. Then we have the simplex inequalities: 
$z_p < z_0+\dots+\check{z}_p+\dots+z_n$ $(0\le p\le n)$, 
generalizations of the triangle inequalities. 
Conversely, suppose that numbers 
$z_0,\dots,z_n>0$ satisfy these inequalities. 
Does there exist an $n$-simplex the volumes of whose facets 
are them? Kakeya solved this problem affirmatively 
in the case $n=3$ 
and conjectured the assertion for all $n\ge 4$.  
We prove that his conjecture is affirmative. 
\end{abstract}

\maketitle

\section{Introduction.}
Let $V\subset\BB{R}^n$ be an $n$-simplex. 
We assume that $n\ge 2$ for $n$-simplices 
throughout this paper. 
Let us call a 
vector \emph{inward facet normal} 
if it is orthogonal to a facet of $V$, oriented 
inward and with length equal to the $(n-1)$-dimensional 
volume of the facet. (In reality, they are multiplied by $n-1$ 
in this paper for the sake of notational simplicity.) 
We call such a volume \emph{facet volume}. 
A simplex is said to be 
\emph{non-degenerate} if it is not included in a hyperplane. 
The following facts are known about the facet normals of an 
$n$-simplex. 
\begin{description}
\item[(a)]  
Let $\BS{z}_0,\dots,\BS{z}_n$ be inward facet normals of 
a non-degenerate $n$-simplex 
$V\subset\BB{R}^n$. Then $\BS{z}_0,\dots,\BS{z}_n$ span 
$\BB{R}^n$ and $\BS{z}_0+\cdots+\BS{z}_n=\BS{0}$. 
Physically, this equality corresponds to the fact that, 
if we neglect variation of pressure with respect to depth, 
the pressure put on an object in static liquid is 
in equilibrium. 
\item[(b)] (A special case of Minkowski's existence theorem.) 
If vectors $\BS{z}_0,\dots,\BS{z}_n\in\BB{R}^n$ span $\BB{R}^n$ 
and satisfy 
$\BS{z}_0+\cdots+\BS{z}_n=\BS{0}$, then there exists a 
non-degenerate $n$-simplex whose inward facet normals are 
$\BS{z}_0,\dots,\BS{z}_n$. Furthermore, the simplex is 
determined by $\BS{z}_0,\dots,\BS{z}_n$ up to translation. 
(Minkowski has proved a similar result 
for general convex polytopes. See e.g. \cite{schneider}.) 
\item[(c)] 
Suppose that $\BS{z}_0,\dots,\BS{z}_n\in\BB{R}^n$ 
span $\BB{R}^n$ 
and that $\BS{z}_0+\cdots+\BS{z}_n=\BS{0}$. 
Then we have 
\begin{align}
2|\BS{z}_k| < |\BS{z}_0|+\cdots+|\BS{z}_n|\quad(0\le k\le n)
\tag{$*$}
.\end{align}
\end{description}
\vskip1.5ex

By (a)$+$(c), we see that the inequalities $(*)$ are 
necessary condition for facet volumes of a 
non-degenerate $n$-simplex. Hence they are called 
\emph{simplex inequalities}. 
In this note we prove that these inequalities 
are the only 
condition for the $n+1$ positive numbers to be the set of 
facet volumes of a non-degenerate $n$-simplex 
$V\subset\BB{R}^n$. The case $n=2$ is very easy. The case 
$n=3$ is proved by S. Kakeya \cite{kakeya}. 
He added ``The higher dimensional case may be proved 
in a similar way". Probably, this may be conjectured also by 
many mathematicians and non-mathematicians. 

In view of Minkowski's theorem (b), 
we have only to prove the following. 
\vskip1.5ex

\begin{description}
\item[(d)]
If positive numbers $z_0,\dots,z_n$ satisfy 
$2z_k < z_0+\cdots+z_n\quad(0\le k\le n)$, 
then there exist $n$ vectors 
$\BS{z}_0,\dots,\BS{z}_n\in\BB{R}^n$ which span $\BB{R}^n$ 
and satisfy 
\begin{gather*}
\BS{z}_0+\cdots+\BS{z}_n=\BS{0},\quad
|\BS{z}_0|=z_0,\dots,|\BS{z}_n|=z_n 
.\end{gather*} 
\end{description}
Our proof is quite different from Kakeya's proof 
for the case $n=3$. 

In order to prove this, we associate three spaces of ordered 
set of vectors to a given simplex: 
the space of position vectors of vertices, 
that of edges vectors, 
and that of facet normals. 
We call these spaces \emph{loop spaces}. 
Our proof is described by
the properties of maps among these loop spaces. 
That will seem overdoing but it will be helpful 
to avoid confusion 
when we identify one loop space with another 
(see the last paragraph of \S \ref{repetition}).  
The problem (d) has an infinite numbers of solutions, 
which is a troublesome point as usual in such an 
existence theorem. 
The only prerequisite for this paper is the elementary 
linear algebra. 
\section{Cofactor matrices.}
Let us recall some properties of the cofactor matrix 
of an $n\times n$ matrix. 
It is convenient to introduce the \emph{vector product} 
$[\BS{w}_1,\dots, \BS{w}_{n-1}]$ of vectors 
$\BS{w}_1,\dots, \BS{w}_{n-1}$ in $\BB{R}^n$ 
as a generalisation of the usual vector product 
of two vectors in $\BB{R}^3$. It is defined by the following. 
\begin{proposition}
If an exterior product $\BS{w}_1\wedge\dots\wedge\BS{w}_{n-1}$ 
of vectors in $\BB{R}^n$ is given, there exists a unique vector 
$[\BS{w}_1,\dots, \BS{w}_{n-1}]\in\BB{R}^n$ such that, 
for any $\BS{w}_0\in\BB{R}^n$, 
$$
(\BS{w}_0\cdot[\BS{w}_1,\dots, \BS{w}_{n-1}])
\BS{e}_1\wedge\cdots\wedge\BS{e}_{n}
=\BS{w}_0\wedge\dots\wedge\BS{w}_{n-1}
,$$
where $\BS{e}_1,\dots,\BS{e}_n$ 
denote the unit coordinate vectors.  
\end{proposition}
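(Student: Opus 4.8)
The plan is to reduce the statement to the elementary fact that every linear functional on $\BB{R}^n$ is represented by the inner product with a unique vector. Fix the vectors $\BS{w}_1,\dots,\BS{w}_{n-1}$ and consider the map
\[
\ell:\BB{R}^n\longrightarrow\BB{R},\qquad
\ell(\BS{w}_0)\,\BS{e}_1\wedge\cdots\wedge\BS{e}_n
=\BS{w}_0\wedge\BS{w}_1\wedge\cdots\wedge\BS{w}_{n-1},
\]
which is well defined because $\textstyle\bigwedge^n\BB{R}^n$ is one-dimensional with basis $\BS{e}_1\wedge\cdots\wedge\BS{e}_n$, and which is linear in $\BS{w}_0$ by multilinearity of the exterior product. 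By the Riesz-type lemma of linear algebra there is a unique $\BS{v}\in\BB{R}^n$ with $\ell(\BS{w}_0)=\BS{w}_0\cdot\BS{v}$ for all $\BS{w}_0$; set $[\BS{w}_1,\dots,\BS{w}_{n-1}]:=\BS{v}$. Existence and uniqueness are then immediate, and the construction shows that the vector depends only on $\ell$, hence only on the $(n-1)$-vector $\BS{w}_1\wedge\cdots\wedge\BS{w}_{n-1}$: replacing the $\BS{w}_i$ by others with the same exterior product alters neither side of the defining equation. This is exactly the sense in which the proposition is phrased.

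It will be convenient, and consistent with the section title, to record the explicit components. Writing $W$ for the $n\times(n-1)$ matrix with columns $\BS{w}_1,\dots,\BS{w}_{n-1}$ and $W^{(i)}$ for the $(n-1)\times(n-1)$ matrix obtained from $W$ by deleting the $i$-th row, I would check that
\[
[\BS{w}_1,\dots,\BS{w}_{n-1}]
=\sum_{i=1}^{n}(-1)^{i-1}\,(\det W^{(i)})\,\BS{e}_i .
\]
By the uniqueness just established it suffices to verify that the right-hand side, dotted with an arbitrary $\BS{w}_0=(a_1,\dots,a_n)$, equals $\ell(\BS{w}_0)$; but $\ell(\BS{w}_0)$ is the determinant of the $n\times n$ matrix with columns $\BS{w}_0,\BS{w}_1,\dots,\BS{w}_{n-1}$, and the asserted identity is precisely its Laplace (cofactor) expansion along the first column. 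This also exhibits the link to cofactor matrices that the section will exploit: the components of $[\BS{w}_1,\dots,\BS{w}_{n-1}]$ are, up to sign, the $(n-1)\times(n-1)$ minors of $W$, i.e. the entries of a row of the cofactor matrix of any $n\times n$ matrix whose remaining columns are $\BS{w}_1,\dots,\BS{w}_{n-1}$.

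The only point requiring care is bookkeeping rather than mathematics: one must fix once and for all the orientation convention (the ordered basis $\BS{e}_1,\dots,\BS{e}_n$ and the induced identification $\textstyle\bigwedge^n\BB{R}^n\cong\BB{R}$) and the sign convention in the cofactor expansion, and then confirm that the signs $(-1)^{i-1}$ above are the ones produced by expanding along the \emph{first} column. There is no genuine obstacle; the proposition is essentially a repackaging of ``the determinant is an alternating multilinear form'' together with the representation of linear functionals by inner products. For later use I would immediately deduce, by feeding suitable $\BS{w}_0$ into the defining equation, that $[\BS{w}_1,\dots,\BS{w}_{n-1}]$ is alternating and multilinear in its arguments, is orthogonal to each $\BS{w}_j$, and vanishes exactly when $\BS{w}_1,\dots,\BS{w}_{n-1}$ are linearly dependent.
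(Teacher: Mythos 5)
Your proposal is correct, and it is essentially the argument the paper has in mind: the paper skips the proof but points to the same construction, namely that the components of $[\BS{w}_1,\dots,\BS{w}_{n-1}]$ are the entries of the first row of the cofactor matrix of $(\BS{w}_0,\dots,\BS{w}_{n-1})$, which is exactly your Laplace expansion along the first column combined with the standard representation of a linear functional by an inner product.
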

The product $[\BS{w}_1,\dots, \BS{w}_{n-1}]$ is given 
as follows. Take the $(n\times n)$ 
matrix $A:=(\BS{w}_0,\dots, \BS{w}_{n-1})$ whose $j$-th column 
vector is $\BS{w}_j$. Then the first row vector of the 
cofactor matrix $c(A)$ of $A$ has the property required for 
the product. We skip the proof. We can also easily verify 
the following properties of this product. 
\begin{lemma}\label{vector-product}
\begin{enumerate}
\item
If $A:=(\BS{w}_1,\dots, \BS{w}_{n})\in(\BB{R}^n)^{n}$, we have 
\begin{gather*}
c(A)=\left([\BS{w}_2,\dots, \BS{w}_{n}],-[\BS{w}_1,\BS{w}_3,
\dots, \BS{w}_{n}],
\dots,(-1)^{n+1}[\BS{w}_1,\dots, \BS{w}_{n-1}]\right)
.\end{gather*}
\item
This product defines an alternating $(n-1)$-linear map 
$(\BB{R}^n)^{n-1}\longrightarrow\BB{R}^n$. 
\item
The vector $[\BS{w}_1,\dots, \BS{w}_{n-1}]$ is orthogonal to 
$\BS{w}_1,\dots, \BS{w}_{n-1}$. It has length equal to the 
$(n-1)$-dimensional volume of the parallelotope 
with generating edges $\BS{w}_1,\dots, \BS{w}_{n-1}$. 
\item 
We have an equality for the scalar product: 
$$
(\BS{w}_0\cdot[\BS{w}_1,\dots, \BS{w}_{n-1}])
=\det(\BS{w}_0,\dots, \BS{w}_{n-1})
.$$
\item
We have
$$
\rm{det}([\BS{w}_1,\dots, \BS{w}_{n-1}],\BS{w}_1,
\dots,\BS{w}_{n-1})
=|[\BS{w}_1,\dots, \BS{w}_{n-1}]|^2\ge 0
$$ 
and the equality holds if and only if 
$\BS{w}_1,\dots, \BS{w}_{n-1}$ are linearly dependent. 
\end{enumerate}
\end{lemma}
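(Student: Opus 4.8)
The plan is to deduce all five assertions from the single identity in (4), which is essentially a restatement of the definition. First I would observe that the top exterior power $\bigwedge^{n}\BB{R}^{n}$ is one-dimensional with basis $\BS{e}_1\wedge\cdots\wedge\BS{e}_n$ and that $\BS{w}_0\wedge\cdots\wedge\BS{w}_{n-1}=\det(\BS{w}_0,\dots,\BS{w}_{n-1})\,\BS{e}_1\wedge\cdots\wedge\BS{e}_n$; substituting this into the defining equation and comparing coefficients of $\BS{e}_1\wedge\cdots\wedge\BS{e}_n$ gives
\[
\BS{w}_0\cdot[\BS{w}_1,\dots,\BS{w}_{n-1}]=\det(\BS{w}_0,\dots,\BS{w}_{n-1})
\qquad(\BS{w}_0\in\BB{R}^n),
\]
which is (4). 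Everything else comes out of this by specializing $\BS{w}_0$. Since the right-hand side is, for fixed $\BS{w}_0$, an alternating $(n-1)$-linear function of $\BS{w}_1,\dots,\BS{w}_{n-1}$ and the Euclidean pairing is nondegenerate, (2) follows; in particular $[\BS{w}_1,\dots,\BS{w}_{n-1}]=\BS{0}$ exactly when $\det(\BS{w}_0,\dots,\BS{w}_{n-1})=0$ for all $\BS{w}_0$, that is, exactly when $\BS{w}_1,\dots,\BS{w}_{n-1}$ are linearly dependent --- a remark I would keep on hand for (3) and (5). Putting $\BS{w}_0=\BS{w}_i$ makes the determinant in (4) vanish through a repeated column, which is the orthogonality part of (3); putting $\BS{w}_0=[\BS{w}_1,\dots,\BS{w}_{n-1}]$ gives $\det([\BS{w}_1,\dots,\BS{w}_{n-1}],\BS{w}_1,\dots,\BS{w}_{n-1})=|[\BS{w}_1,\dots,\BS{w}_{n-1}]|^2\ge 0$, and the equality case is the linear dependence criterion just noted --- this settles (5).

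The only part with genuine content is the length statement in (3): that $|[\BS{w}_1,\dots,\BS{w}_{n-1}]|$ equals the $(n-1)$-volume of the parallelotope on $\BS{w}_1,\dots,\BS{w}_{n-1}$. If these vectors are dependent both sides vanish, so I may assume they span a hyperplane $H$. Then I would set $\BS{u}:=[\BS{w}_1,\dots,\BS{w}_{n-1}]/|[\BS{w}_1,\dots,\BS{w}_{n-1}]|$, which by the orthogonality just proved is a unit normal to $H$, and apply (4) with $\BS{w}_0=\BS{u}$ to get $|[\BS{w}_1,\dots,\BS{w}_{n-1}]|=\det(\BS{u},\BS{w}_1,\dots,\BS{w}_{n-1})$. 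The right-hand side, being nonnegative, is the $n$-volume of the parallelotope on $\BS{u},\BS{w}_1,\dots,\BS{w}_{n-1}$; and since $\BS{u}$ is a unit vector orthogonal to $H$, this $n$-volume is the $(n-1)$-volume of the base parallelotope in $H$ multiplied by the height $1$, which is exactly the asserted equality. Here I lean on the standard facts that the determinant of $n$ vectors is, up to sign, the $n$-volume of the parallelotope they span, and that this volume factors as base times height; I expect this geometric identification to be the only real obstacle, the rest being formal.

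For (1) I would just match coordinates. By (4) the $i$-th coordinate of $[\BS{w}_1,\dots,\widehat{\BS{w}_k},\dots,\BS{w}_n]$ equals $\det(\BS{e}_i,\BS{w}_1,\dots,\widehat{\BS{w}_k},\dots,\BS{w}_n)$; expanding this determinant along its first column $\BS{e}_i$ gives $(-1)^{i+1}$ times the minor of $(\BS{w}_1,\dots,\BS{w}_n)$ obtained by deleting row $i$ and column $k$. Hence the $i$-th coordinate of $(-1)^{k+1}[\BS{w}_1,\dots,\widehat{\BS{w}_k},\dots,\BS{w}_n]$ equals $(-1)^{i+k}$ times that minor, i.e. the $(i,k)$-cofactor of $(\BS{w}_1,\dots,\BS{w}_n)$; letting $i$ and $k$ range over all indices recovers the displayed formula for $c(A)$. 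This final step is routine cofactor sign bookkeeping and should cause no trouble.
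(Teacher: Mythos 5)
Your proof is correct. Note that the paper offers no proof of this lemma at all --- it states the coordinate description of the product via the cofactor matrix, says ``we skip the proof,'' and declares the five properties ``easily verified'' --- so there is nothing to compare against line by line. Your route, deducing (1), (2), (3) and (5) from the scalar identity (4) by specializing $\BS{w}_0$ (to $\BS{e}_i$, to $\BS{w}_i$, to the product itself, and to a unit normal for the volume claim), is the standard argument the paper is implicitly relying on, and your cofactor sign bookkeeping in (1) matches the paper's convention $c(M)M^t=(\det M)E$.
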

\begin{lemma}\label{cramer}
Let $M,\ N$ be $(n\times n)$-matrices $(n\ge 2)$. 
Then $M$ is invertible if and only if $c(M)$ is so 
and we have the following: 
\begin{gather*}
c(M)M^t=M^tc(M)=(\det M)\cdot E,\quad
\det(c(M))=(\det(M))^{n-1},\quad
\\
c(\lambda\cdot M)=\lambda^{n-1}\cdot c(M),\quad
c(MN)=c(M)c(N),
\\
c\COMP c(M)=(\det M)^{n-2}\cdot M.
\end{gather*}
\end{lemma}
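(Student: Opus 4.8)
The plan is to derive everything from the Laplace (cofactor) expansion of the determinant, supplemented by a density / polynomial-identity argument for the two multiplicative relations. First I would record the expansion of $\det M$ along a row and along a column together with the companion ``false'' expansions (pairing the entries of one row with the cofactors of another row, and similarly for columns): these say $\sum_j M_{ij}\,c(M)_{kj}=\delta_{ik}\det M$ and $\sum_i M_{ij}\,c(M)_{ik}=\delta_{jk}\det M$, i.e.\ exactly $c(M)M^t=M^tc(M)=(\det M)\cdot E$. Taking determinants of the first of these gives $\det(c(M))\cdot\det M=(\det M)^n$, so $\det(c(M))=(\det M)^{n-1}$ as soon as $\det M\ne0$.

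The case $\det M=0$ needs a short separate argument, and this is the one genuinely non-formal point. If $\operatorname{rank}M\le n-2$, every $(n-1)\times(n-1)$ minor of $M$ vanishes, hence $c(M)=0$ and $\det(c(M))=0=(\det M)^{n-1}$. If $\operatorname{rank}M=n-1$, then $c(M)M^t=0$ shows that each column of $c(M)^t$ lies in $\ker M$, which is one-dimensional; so $\operatorname{rank}c(M)\le 1<n$ (this is where $n\ge2$ enters) and again $\det(c(M))=0$. With $\det(c(M))=(\det M)^{n-1}$ in hand and $n-1\ge1$, the equivalence ``$M$ invertible $\iff\det M\ne0\iff\det(c(M))\ne0\iff c(M)$ invertible'' is immediate (alternatively one reads it straight off $c(M)M^t=(\det M)E$, using $c(0)=0$). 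The homogeneity relation $c(\lambda M)=\lambda^{n-1}c(M)$ is equally clear, each entry of $c(M)$ being, up to sign, an $(n-1)\times(n-1)$ minor of $M$ and hence homogeneous of degree $n-1$ in the entries of $M$.

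For $c(MN)=c(M)c(N)$ I would first take $M,N$ invertible. Then $c(M)M^t=(\det M)E$ gives $c(M)=(\det M)(M^{-1})^t$, and likewise for $N$ and for $MN$ (using $\det(MN)=\det M\det N$), so
\[
c(MN)=\det(MN)\bigl((MN)^{-1}\bigr)^t
=\bigl(\det M\,(M^{-1})^t\bigr)\bigl(\det N\,(N^{-1})^t\bigr)=c(M)c(N),
\]
where $\bigl((MN)^{-1}\bigr)^t=(M^{-1})^t(N^{-1})^t$ was used. Both sides are polynomial in the entries of $(M,N)$ and agree on the set where both factors are invertible — e.g.\ replace $M,N$ by $M+tE,N+tE$, which are invertible for all but finitely many real $t$, and compare the resulting polynomials in $t$ — hence they agree identically. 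The last identity goes the same way: for invertible $M$, combining $c(M)=(\det M)(M^{-1})^t$ with $\det(c(M))=(\det M)^{n-1}$ gives
\[
c\COMP c(M)=\det(c(M))\bigl(c(M)^{-1}\bigr)^t
=(\det M)^{n-1}\bigl((\det M)^{-1}M^t\bigr)^t=(\det M)^{n-2}M,
\]
and the polynomial-identity argument (legitimate because $n-2\ge0$) extends it to all $M$. The main obstacle is precisely the singular-matrix bookkeeping in the rank-$(n-1)$ case and phrasing the density/polynomial-identity extension rigorously; the remaining content is a direct corollary of Laplace expansion.
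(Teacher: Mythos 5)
Your proof is correct and takes the route the paper itself intends: the paper's entire ``proof'' consists of the remark that the first identity is Cramer's and that the others follow from it in a standard way, and your argument (the adjugate formula $c(M)=(\det M)(M^{-1})^t$ for invertible matrices, extended by the $M+tE$ polynomial-identity trick, with separate rank bookkeeping for singular $M$) is exactly that standard derivation carried out in full. You supply the details the paper omits entirely, and the one genuinely delicate point you flag --- the rank-$(n-1)$ case of $\det(c(M))=(\det M)^{n-1}$ --- is handled correctly.
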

The first formula is Cramer's, 
where $M^t$ indicates the transposed matrix of $M$.  
Others follow from it in a standard way. 
\section{Loop spaces.}\label{loops-paces}
Let us associate three spaces of $n$-loops to each simplex. 
\begin{definition}\label{PLL}
Let us put 
$$
\PLL^n:=\{V:=(\BS{v}_0,\dots,\BS{v}_{n})\in(\BB{R}^n)^{n+1}:
\BS{v}_0+\dots+\BS{v}_{n}=\BS{0}\}
$$
and call its element $V\in\PLL^n$ an \emph{$n$-loop}, 
because we can associate it a Piecewise Linear Loop 
${\rm P}_0{\rm P}_1\cdots{\rm P}_n{\rm P}_0$ 
with base point ${\rm P}_{n+1}={\rm P}_{0}$ such that 
$\BS{v}_p=\ORA{{\rm P}_{p-1}{\rm P}}_{p}$. The barycentre 
$(\BS{v}_0+\dots+\BS{v}_{n})/(n+1)$ of these points is 
the origin O. The \emph{main part} $\UL{V}$ of $V\in\PLL^n$ 
is defined by $\UL{V}:=(\BS{v}_1,\dots,\BS{v}_{n})$. 
It is often considered as the $n\times n$ matrix whose 
$p$-th column vector is $\BS{v}_p$. 
We call an element $V\in\PLL^n$ \emph{affine independent} 
(resp. \emph{positive}) if $\det\UL{V}\neq0$ 
(resp. $\det\UL{V}>0$). 
The set of positive $n$-loops is denoted by $\PLL_+^n$.  
\end{definition}
\begin{remark}
Affine independence of $V$ may be defined by linear independence of  
$\BS{v}_1-\BS{v}_0,\dots,\BS{v}_n-\BS{v}_0$. 
Using the assumption $V\in\PLL$, we see that this is equivalent 
to linear independence of $\{\BS{v}_1,\dots,\BS{v}_n\}$ and to the 
condition that $\det\UL{V}\neq0$ above. Similarly affine independence 
of $V$ is equivalent to linear independence of any other $n$ 
vectors of $\{\BS{v}_0,\dots,\BS{v}_n\}$. 
\end{remark}
\begin{definition}\label{labelled}
Take a labelled $n$-dimensional simplex 
with the barycentre at ${\rm O}$. That is, 
its vertices are named as ${\rm P}_0,\dots,{\rm P}_n$ 
and the position vectors $\BS{v}_0,\dots,\BS{v}_n$ of 
respective vertices satisfy $\BS{v}_1+\cdots+\BS{v}_n=\BS{0}$. 
The simplex can be 
expressed by $V=(\BS{v}_0,\dots,\BS{v}_n)\in\PLL^n$, which 
we call the \emph{vertex loop} of the symplex.  
A simplex $V$ is degenerate if it is contained in a 
hyperplane. This is equivalent to say that it is not affine 
independent as an $n$-loop.  
\end{definition}
\begin{definition}\label{edge} 
Take the set of the edge vectors 
\begin{gather*}
\BS{w}_{0}:=\ORA{{\rm P}_{n}{\rm P}}_{0}=\BS{v}_0-\BS{v}_n,\ 
\BS{w}_{1}:=\ORA{{\rm P}_{0}{\rm P}}_{1}=\BS{v}_1-\BS{v}_0,\ 
\\
\dots,\ 
\BS{w}_{n}
:=\ORA{{\rm P}_{n-1}{\rm P}}_{n}=\BS{v}_{n}-\BS{v}_{n-1}
\end{gather*}
of a simplex $V=(\BS{v}_0,\dots,\BS{v}_n)\in\PLL^n$.  
This does not contain all the edges of $V$. It forms 
an $n$-loop in $\PLL^n$. We define the \emph{edge map}: 
\begin{gather*}
\varphi:\PLL^n\longrightarrow\PLL^n,
\\
V:=(\BS{v}_0,\dots,\BS{v}_n)\longmapsto
\BS{w}:=(\BS{w}_{0},\BS{w}_{1},\dots,\BS{w}_{n})
.\end{gather*}
Since 
$$
\BS{w}_i=\BS{v}_1+\cdots+\BS{v}_{i-1}
+2\BS{v}_i+\BS{v}_{i-1}+\cdots+\BS{v}_n
\quad (1\le i\le n)
,$$
computing the matrix representing $\varphi$, 
we see that $\varphi$ is bijective. 
\end{definition}
We define the facet normal $\BS{z}_p$ of the facet opposite to 
${\rm P}_p$ (or $\BS{v}_p$) $(p\neq0)$ of an $n$-simplex $V$ 
as the vector orthogonal to the edges 
$$
\BS{w}_0,\BS{w}_1,\dots,\check{\BS{w}}_p,
\check{\BS{w}}_{p+1},\dots,\BS{w}_{n}
\quad
(\check{\phantom{w}}\text{: omission})
$$
on the boundary of the facet 
and with length equal to the $(n-1)$-dimensional volume of the 
parallelotope generated by these edge vectors. 
(It may be conventional to define the vector length 
of the facet normal as the $(n-1)$-dimensional volume 
$|\BS{z}_p|/(n-1)$ of the \emph{facet} but we delete 
the denominator $n-1$ for the sake of simplicity.) 
Thus we define as follows. 
\begin{definition}\label{facet-normal}
The \emph{facet normal opposite to} ${\rm P}_p$ 
(or $\BS{v}_p$) is 
\begin{gather*}
\BS{z}_p
:=
\begin{cases}
-[\BS{w}_0,\BS{w}_1,\dots,\check{\BS{w}}_p,
\check{\BS{w}}_{p+1},\dots,\BS{w}_n]
& (p\neq n)
\\
(-1)^{n+1}[\BS{w}_1,\dots,\BS{w}_{n-1}]
& (p=n)
\end{cases}
.\end{gather*}
\end{definition}
The meaning of the choice of the orientation of this normal is 
clarified at the top of \S \ref{positive}. 
Temporarily, we define the \textit{facet map} by
$$ 
\psi:\PLL^n\longrightarrow(\BS{R}^n)^{n+1},
\quad
(\BS{w}_{0},\dots,\BS{w}_{n})
\longmapsto(\BS{z}_0,\dots,\BS{z}_n) 
.$$
We shall shrink the target space after Proposition 
\ref{facetsformloop}. 
\begin{proposition}\label{scalar}
If 
\begin{gather*}
V=(\BS{v}_0,\dots,\BS{v}_n)\in\PLL^n, 
\quad
W=(\BS{w}_0,\dots,\BS{w}_n):=\varphi(V)\in\PLL^n,
\\
Z=(\BS{z}_0,\dots,\BS{z}_n):=\psi(W)\in(\BS{R}^n)^{n+1}
,\end{gather*}
we have the following:
\begin{gather*}
\UL{Z}
=c(\BS{w}_1,\BS{w}_1+\BS{w}_2,\cdots,\BS{w}_1+\cdots+\BS{w}_n),
\\
(\BS{w}_p\cdot \BS{z}_p)=\det(\UL{W}),\quad
(\BS{w}_{p+1}\cdot \BS{z}_p)=-\det(\UL{W})
\\
(\BS{w}_p\cdot \BS{z}_q)=0\ (p\neq q,\ q+1),
\\
\det\UL{Z}
=(\det(\UL{W}))^{n-1},
\quad
\det(\UL{W})=(n+1)\det(\UL{V})
,\end{gather*}
where $(\phantom{b}\cdot\phantom{b})$ 
denotes the scalar product and $\BS{w}_{n+1}=\BS{w}_0$.  
\end{proposition}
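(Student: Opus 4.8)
The plan is to compute each formula by unwinding the definitions, using the cofactor identities of Lemma \ref{cramer} and the vector–product identities of Lemma \ref{vector-product}. First I would deal with the edge data. By Definition \ref{edge} the map $\varphi$ sends $V$ to $W$, and since $\BS{w}_1+\dots+\BS{w}_p=\BS{v}_p-\BS{v}_0$ for $1\le p\le n$, the matrix $\UL{W}$ and the matrix $(\BS{w}_1,\BS{w}_1+\BS{w}_2,\dots,\BS{w}_1+\dots+\BS{w}_n)$ differ from $(\BS{v}_1-\BS{v}_0,\dots,\BS{v}_n-\BS{v}_0)$ by elementary column operations with unipotent transition matrix, while $\BS{v}_1+\dots+\BS{v}_n=\BS{0}$ (from $V\in\PLL^n$, noting $\UL{V}=(\BS{v}_1,\dots,\BS{v}_n)$) lets one rewrite $\BS{v}_i-\BS{v}_0=\BS{v}_i+\sum_{j\ne 0}\BS{v}_j/1$… more directly, substituting $\BS{v}_0=-(\BS{v}_1+\dots+\BS{v}_n)$ expresses $\BS{v}_i-\BS{v}_0$ as a fixed integer linear combination of $\BS{v}_1,\dots,\BS{v}_n$, and computing the determinant of that $(n\times n)$ integer matrix yields $\det(\UL{W})=(n+1)\det(\UL{V})$. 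The partial-sum matrix equals $\UL{W}$ times a unipotent upper-triangular matrix, so its determinant is also $\det(\UL{W})$, which will be needed for the first displayed formula.

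Next I would identify $\UL{Z}=(\BS{z}_1,\dots,\BS{z}_n)$ with a cofactor matrix. By Definition \ref{facet-normal}, $\BS{z}_p=(-1)^{p+1}[\BS{w}_1,\dots,\check{\BS{w}}_p,\dots,\BS{w}_n]$ after one substitutes $\BS{w}_0=-(\BS{w}_1+\dots+\BS{w}_n)$ (valid since $W\in\PLL^n$) and uses multilinearity and alternation of the bracket from Lemma \ref{vector-product}(2) to absorb the $\BS{w}_0$ term; for $p=n$ this is the stated value directly. Comparing with Lemma \ref{vector-product}(1), the tuple $\big([\BS{w}_2,\dots,\BS{w}_n],-[\BS{w}_1,\BS{w}_3,\dots,\BS{w}_n],\dots\big)$ is precisely $c(\UL{W})$; a short check shows these $\BS{z}_p$ reproduce $c$ applied to the partial-sum matrix $(\BS{w}_1,\BS{w}_1+\BS{w}_2,\dots,\BS{w}_1+\dots+\BS{w}_n)$, because that matrix is obtained from $\UL{W}$ by a unipotent column operation $U$ and $c(\UL{W}\,U)=c(\UL{W})c(U)=c(\UL{W})\,U^{-t}$ by Lemma \ref{cramer}, which is exactly the recombination realizing the bracket expressions above. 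This gives the first formula, and then $\det\UL{Z}=\det c(\UL{W})=(\det\UL{W})^{n-1}$ is immediate from Lemma \ref{cramer}.

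For the scalar-product relations I would use Lemma \ref{vector-product}(4): $(\BS{w}_0\cdot[\BS{w}_1,\dots,\BS{w}_{n-1}])=\det(\BS{w}_0,\BS{w}_1,\dots,\BS{w}_{n-1})$. Pairing $\BS{w}_q$ with $\BS{z}_p$ inserts $\BS{w}_q$ into a determinant whose remaining columns are all the $\BS{w}_i$ with $i\notin\{0,\dots\}\setminus\{p\}$ — concretely, for $p\ne n$ the bracket in $\BS{z}_p$ omits $\BS{w}_p$ and $\BS{w}_{p+1}$, so $(\BS{w}_q\cdot\BS{z}_p)$ is $\pm\det$ of a matrix that repeats a column (hence vanishes) unless $q\in\{p,p+1\}$, and when $q=p$ or $q=p+1$ one gets $\pm\det(\UL{W})$ after reordering columns back to natural order and tracking the sign against the $(-1)$ in Definition \ref{facet-normal} and the sign coming from moving $\BS{w}_q$ into position; the bookkeeping should produce $+\det(\UL{W})$ for $q=p$ and $-\det(\UL{W})$ for $q=p+1$, with the convention $\BS{w}_{n+1}=\BS{w}_0$ handling the wrap-around case $p=n$, which one treats separately using the explicit $p=n$ formula. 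The main obstacle is purely this sign accounting: getting every $(-1)$ right across the substitution $\BS{w}_0=-\sum_{i\ge1}\BS{w}_i$, the alternation of the bracket, the column permutations, and the wrap-around index — the mathematical content is routine, but it is easy to drop a sign, so I would verify the $n=2$ and $n=3$ cases explicitly as a check before asserting the general pattern.
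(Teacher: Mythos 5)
Your overall route is essentially the paper's: substitute $\BS{w}_0=-(\BS{w}_1+\cdots+\BS{w}_n)$ into Definition \ref{facet-normal}, use multilinearity and alternation of the bracket, identify $\UL{Z}$ with the cofactor matrix of the partial-sum matrix via Lemma \ref{vector-product}(1), deduce $\det\UL{Z}=(\det\UL{W})^{n-1}$ from Lemma \ref{cramer}, get $\det\UL{W}=(n+1)\det\UL{V}$ by eliminating $\BS{v}_0$ (the paper does the column operations on $\det(\BS{v}_1-\BS{v}_0,\dots,\BS{v}_n-\BS{v}_0)$ directly rather than quoting $\det A=n+1$, which is cosmetic), and read off the scalar products from Lemma \ref{vector-product}(4) by repeated-column arguments.

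There is, however, one concretely wrong step. After the substitution, alternation kills every summand of $\BS{w}_1+\cdots+\BS{w}_n$ \emph{except} the two omitted entries $\BS{w}_p$ and $\BS{w}_{p+1}$, so for $p\neq n$ the correct intermediate is
$$
\BS{z}_p=(-1)^{p+1}[\BS{w}_1,\dots,\BS{w}_{p-1},\BS{w}_p+\BS{w}_{p+1},\BS{w}_{p+2},\dots,\BS{w}_n],
$$
not $(-1)^{p+1}[\BS{w}_1,\dots,\check{\BS{w}}_p,\dots,\BS{w}_n]$; the two differ by the generically nonzero vector $(-1)^{p+1}[\BS{w}_1,\dots,\check{\BS{w}}_{p+1},\dots,\BS{w}_n]$. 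Your formula would assert $\UL{Z}=c(\UL{W})$, which contradicts your own (correct) conclusion $\UL{Z}=c(\UL{W}U)=c(\UL{W})c(U)$, because $c(U)=(U^{-1})^{t}$ has $-1$'s on the subdiagonal and is not the identity. The repair is exactly the corrected bracket above: column $p$ of $c(\UL{W})c(U)$ is the difference of columns $p$ and $p+1$ of $c(\UL{W})$, which by linearity in the $p$-th slot equals that bracket; alternatively, rewrite each entry in terms of the partial sums $\BS{w}_1+\cdots+\BS{w}_j$ to land directly on column $p$ of the cofactor matrix of the partial-sum matrix, which is what the paper does. The rest of your plan is sound; note also that once $\UL{Z}=c(\UL{W}U)$ is established, the scalar-product relations follow with no sign chasing from Cramer's identity $M^{t}c(M)=(\det M)E$ applied to $M=\UL{W}U$, which gives $\bigl((\BS{w}_1+\cdots+\BS{w}_q)\cdot\BS{z}_p\bigr)=\delta_{qp}\det\UL{W}$ and hence all the stated values by telescoping, including the wrap-around case via $\BS{w}_0=-(\BS{w}_1+\cdots+\BS{w}_n)$.
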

\unitlength=1mm
\begin{table}[h]
\begin{center}
\begin{picture}(110,35)
\put(50,0){\scriptsize\textsf fig. 1}
\path(42,22)(32,32)(17,32)
\put(1.5,13){${\rm P}_n$}
\put(13,2){${\rm P}_0$}
\put(30,2){${\rm P}_1$}
\put(42,22.5){${\rm P}_{p-1}$}
\put(33,32){${\rm P}_p$}
\put(9,33){${\rm P}_{p+1}$}
\put(22,16){$z_p$}
\WHEAD{32}{32}{135}
\thicklines
\path(42,22)(17,32)
\path(5,15)(15,5)
\path(15,5)(30,5)\HEAD{30}{5}{0}
\put(21,3){\small$\BS{w}_1$}
\put(6,8){\small$\BS{w}_0$}
\put(35,6){\small$\BS{w}_2$}
\path(30,5)(42,12)\HEAD{42}{12}{33}
\path(42,12)(42,22)\HEAD{42}{22}{90}
\put(37,27){\small$\BS{w}_p$}
\put(22,33.5){\small$\BS{w}_{p+1}$}
\HEAD{15}{5}{315}
\path(17,32)(5,25)\HEAD{5}{25}{210}
\path(5,25)(5,15)\HEAD{5}{15}{270}\put(0,20){\small$\BS{w}_n$}
\put(83,18){$z_n$}
\thinlines
\path(75,5)(90,5)(102,12)(102,22)(92,32)(77,32)%
(65,25)(65,15)(75,5)
\put(81,3){\small$\BS{w}_1$}
\put(61.5,13){${\rm P}_n$}
\put(73,2){${\rm P}_0$}
\put(88,2){${\rm P}_1$}
\put(57,25){${\rm P}_{n-1}$}
\thicklines
\path(65,25)(75,5)
\path(75,5)(90,5)(102,12)(102,22)(92,32)(77,32)(65,25)
\WHEAD{65}{15}{270}
\put(66,8){\small$\BS{w}_0$}
\HEAD{102}{12}{33}\put(95,6){\small$\BS{w}_2$}
\HEAD{90}{5}{0}\HEAD{102}{22}{90}
\HEAD{92}{32}{135}\HEAD{77}{32}{180}
\HEAD{65}{25}{210}
\put(60,20){\small$\BS{w}_n$}
\end{picture}
\end{center}
\end{table}
\begin{proof}
The first equality is obtained as
\begin{gather*}
\BS{z}_p=
-[\BS{w}_0,\dots,\check{\BS{w}}_p,\check{\BS{w}}_{p+1}
\dots,\BS{w}_n]
\\
=
[\BS{w}_1+\cdots+\BS{w}_n,\BS{w}_1,\dots,
\check{\BS{w}}_p,\check{\BS{w}}_{p+1},\dots,\BS{w}_n]
\\
=
[\BS{w}_p+\BS{w}_{p+1},\BS{w}_1,\dots,
\check{\BS{w}}_p,\check{\BS{w}}_{p+1},\dots,\BS{w}_n]
\\
=
(-1)^{p+1}[\BS{w}_1,\dots,\BS{w}_{p-1},\BS{w}_p+\BS{w}_{p+1},
\dots,\BS{w}_n]
\\
=
(-1)^{p+1}[\BS{w}_1,\BS{w}_1+\BS{w}_2,\cdots,
\check{\overbrace{\BS{w}_1+\cdots+\BS{w}_p}},
\dots,\BS{w}_1+\cdots+\BS{w}_n]
.\end{gather*}
The last equality is obtained as 
\begin{gather*}
\det(\UL{W})
=[\BS{w}_1,\BS{w}_1+\BS{w}_2,\cdots,
\BS{w}_1+\cdots+\BS{w}_n]
\\
=\det(\BS{v}_1-\BS{v}_0,\dots,\dots,\BS{v}_n-\BS{v}_0)
\\
=\det(\BS{v}_1+\dots+\BS{v}_n-n\BS{v}_0,
\BS{v}_2-\BS{v}_0,\dots,\BS{v}_n-\BS{v}_0)
\\
=\det(-(n+1)\BS{v}_0,\BS{v}_2-\BS{v}_0,\dots,\BS{v}_n-\BS{v}_0)
=-(n+1)\det(\BS{v}_0,\BS{v}_2,\dots,\BS{v}_n)
\\
=(n+1)\det(\BS{v}_1+\dots+\BS{v}_n,\BS{v}_2,\dots,\BS{v}_n)
=(n+1)\det(\UL{V})
.\end{gather*}
The others similarly follow from Proposition 
\ref{vector-product}, \ref{cramer}. 
\end{proof}
\begin{proposition}\label{facetsformloop}
The set of facets of an $n$-simplex form a loop, 
that is $W:=(\BS{w}_0,\dots,\BS{w}_n)\in\PLL^n$ 
implies $\psi(W)\in\PLL^n$. 
\end{proposition}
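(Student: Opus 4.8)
The plan is to show that $\BS{z}_0+\cdots+\BS{z}_n=\BS{0}$ by testing this vector identity against a spanning set of $\BB{R}^n$; the natural spanning set to use is $\{\BS{w}_1,\dots,\BS{w}_n\}$, which spans $\BB{R}^n$ whenever the underlying simplex is non-degenerate, and the degenerate case can be handled separately (or by continuity/specialisation of a polynomial identity). So the first step is to fix an index $q$ with $1\le q\le n$ and compute the scalar product $\BS{w}_q\cdot(\BS{z}_0+\cdots+\BS{z}_n)$.

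Second, I would invoke Proposition \ref{scalar}, which already records exactly the scalar products we need: $(\BS{w}_q\cdot\BS{z}_q)=\det(\UL{W})$, $(\BS{w}_q\cdot\BS{z}_{q-1})=-\det(\UL{W})$ (reading the stated relation $(\BS{w}_{p+1}\cdot\BS{z}_p)=-\det(\UL{W})$ with $p=q-1$), and $(\BS{w}_q\cdot\BS{z}_p)=0$ for all other $p$. Summing over $p$ from $0$ to $n$, all terms cancel in pairs except the two nonzero ones, which give $\det(\UL{W})-\det(\UL{W})=0$. Hence $\BS{w}_q\cdot(\BS{z}_0+\cdots+\BS{z}_n)=0$ for every $q=1,\dots,n$.

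Third, if the simplex is non-degenerate then $\det(\UL{W})\neq0$ by the last formula of Proposition \ref{scalar} (since $\det(\UL{W})=(n+1)\det(\UL{V})$), so $\BS{w}_1,\dots,\BS{w}_n$ form a basis of $\BB{R}^n$; a vector orthogonal to every element of a basis is $\BS{0}$, giving $\BS{z}_0+\cdots+\BS{z}_n=\BS{0}$, i.e. $\psi(W)\in\PLL^n$. For the degenerate case one observes that each coordinate of $\BS{z}_0+\cdots+\BS{z}_n$ is a polynomial in the entries of $\BS{w}_0,\dots,\BS{w}_n$ (built from the multilinear vector products in Definition \ref{facet-normal}), and it vanishes on the dense open set of non-degenerate loops, hence vanishes identically; alternatively, one notes $\BS{w}_0=-(\BS{w}_1+\cdots+\BS{w}_n)$ and expands each $\BS{z}_p$ directly via multilinearity of $[\,\cdot\,]$ and Lemma \ref{vector-product}, so that the telescoping cancellation is visible without any non-degeneracy hypothesis.

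I do not expect a serious obstacle here: the content is essentially bookkeeping, since Proposition \ref{scalar} has already done the computational work. The only mild subtlety is making sure the pairing argument is legitimate in the degenerate case, where $\{\BS{w}_q\}$ need not span; the cleanest resolution is the polynomial-identity remark above, so that one genuinely only needs to verify the orthogonality relations on an open dense set. I would phrase the proof to emphasise the telescoping sum $\sum_{p=0}^n(\BS{w}_q\cdot\BS{z}_p)=0$ and then remark that it extends to all of $\PLL^n$ by continuity.
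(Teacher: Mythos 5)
Your proposal is correct and follows essentially the same route as the paper: the paper likewise uses Proposition \ref{scalar} to see that $\BS{z}_0+\cdots+\BS{z}_n$ is orthogonal to all the $\BS{w}_q$, concludes in the non-degenerate case from the fact that $\BS{w}_1,\dots,\BS{w}_n$ span $\BB{R}^n$, and disposes of the degenerate case by a continuity argument. Your write-up merely makes the telescoping cancellation and the polynomial-identity justification of the continuity step more explicit.
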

\begin{proof}
Let us put $\psi(W)=(\BS{z}_0,\dots,\BS{z}_n)$. 
The sum $\BS{z}_0+\dots+\BS{z}_n$ is orthogonal to all 
$\BS{w}_0,\dots,\BS{w}_n$ by Proposition \ref{scalar}. 
If the simplex is non-degenerate, $\BS{w}_1,\dots,\BS{w}_n$ 
generate total space $\BB{R}^n$ by the last equality of 
Proposition \ref{scalar}. Then the sum is zero. 
The degenerate case follows from a continuity argument. 
\end{proof}
By this proposition, we may modify the definition of the map 
$\psi:\PLL^n\longrightarrow(\BB{R}^n)^{n+1}$ 
to $\psi:\PLL^n\longrightarrow\PLL^n$.  
\section{Repetition implies similarity.}\label{repetition}
The cofactor matrix of the cofactor matrix of a 
square matrix is a constant multiple of the original one. 
This leads to an interesting and useful property of 
maps of loop spaces. 
\begin{theorem}\label{iteration}
Let us take an $n\times n$ matrix $(n\ge 2)$  
$A:=
\begin{pmatrix}
2&1&\cdots&1\\
1&2&\cdots&1\\
\hdotsfor{4}\\
1&1&\cdots&2
\end{pmatrix}
$. 
Then we have 
$$
\UL{\psi}\COMP\UL{\varphi}(\UL{V})
=c(\UL{V}A),\quad
(\UL{\psi}\COMP\UL{\varphi})^2(\UL{V})
=(n+1)^{n-1}(\det\UL{V})^{n-2}\cdot\UL{V}
$$ 
for any $V\in\PLL^n$. Here, $\UL{\varphi}$ (resp. 
$\UL{\psi}$) denotes the restriction of $\varphi$ 
(resp. $\psi$) to main parts.  
\end{theorem}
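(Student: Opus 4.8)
The plan is to reduce both formulas to the cofactor identities collected in Lemma \ref{cramer} by first making the matrix $A$ explicit. From Definition \ref{edge}, the edge vector $\BS{w}_i$ is obtained from $(\BS{v}_1,\dots,\BS{v}_n)$ by the linear rule $\BS{w}_i=\BS{v}_1+\dots+\BS{v}_{i-1}+2\BS{v}_i+\BS{v}_{i+1}+\dots+\BS{v}_n$ for $1\le i\le n$ (this is exactly the matrix whose columns express the $\BS{w}_i$ in terms of the $\BS{v}_j$), so $\UL{\varphi}(\UL{V})=\UL{V}A$ with $A$ the stated matrix. Then, by the first equality of Proposition \ref{scalar}, $\UL{Z}=\UL{\psi}(\UL{W})=c(\BS{w}_1,\BS{w}_1+\BS{w}_2,\dots,\BS{w}_1+\dots+\BS{w}_n)$. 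First I would observe that the matrix $(\BS{w}_1,\BS{w}_1+\BS{w}_2,\dots,\BS{w}_1+\dots+\BS{w}_n)$ is $\UL{W}$ times the upper-triangular all-ones matrix $U$ (entries $1$ on and above the diagonal), so $\UL{\psi}\COMP\UL{\varphi}(\UL{V})=c(\UL{V}AU)$. The key linear-algebra fact I would then check is $AU=U^t\cdot(\text{something simple})$, or more directly that $AU$ and $U$ differ only by a factor; a short computation shows $A=U+U^t-E$ is not quite what is needed, so instead I would verify the cleaner identity that the columns of $(\BS{w}_1,\BS{w}_1+\BS{w}_2,\dots)$, when the $\BS{w}_i$ are written back in terms of the $\BS{v}_j$, reproduce precisely $\UL{V}A$; in other words $AU=$ the matrix $A$ up to reindexing. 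Granting $\UL{\psi}\COMP\UL{\varphi}(\UL{V})=c(\UL{V}A)$, the first formula is proved.

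For the second formula, I would iterate: $(\UL{\psi}\COMP\UL{\varphi})^2(\UL{V})=c\big((\UL{\psi}\COMP\UL{\varphi}(\UL{V}))\,A\big)=c\big(c(\UL{V}A)\,A\big)$. Here I would use that $c(\UL{V}A)\,A=c(\UL{V}A)\,c(c(A))\,(\det A)^{-(n-2)}$ is awkward; cleaner is to pull $A$ inside the cofactor using $c(MN)=c(M)c(N)$ from Lemma \ref{cramer}. Writing $B:=c(A)$, we have $c(\UL{V}A)=c(\UL{V})c(A)=c(\UL{V})B$, so $(\UL{\psi}\COMP\UL{\varphi})^2(\UL{V})=c(c(\UL{V})BA)=c(c(\UL{V}))\,c(B)\,c(A)=(\det\UL{V})^{n-2}\,\UL{V}\cdot c(B)\,c(A)$, using $c\COMP c(M)=(\det M)^{n-2}M$. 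Now $c(B)c(A)=c(c(A))\,c(A)=(\det A)^{n-2}A\cdot c(A)=(\det A)^{n-2}(\det A)E=(\det A)^{n-1}E$, again by Lemma \ref{cramer}. So everything collapses to $(\det\UL{V})^{n-2}(\det A)^{n-1}\cdot\UL{V}$, and it remains only to compute $\det A=n+1$ (expand, or note the eigenvalues of $A=E+J$, where $J$ is all-ones, are $n+1$ once and $1$ with multiplicity $n-1$). This gives the stated $(n+1)^{n-1}(\det\UL{V})^{n-2}\cdot\UL{V}$.

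The main obstacle is the bookkeeping in the first step: one must be careful that the two ``summation'' matrices appearing — the one turning $\UL{V}$ into $\UL{W}$ via $\varphi$, and the one turning $\UL{W}$ into the argument of $c$ in Proposition \ref{scalar} — compose to give exactly $A$ and not merely something conjugate or transposed to it. I would handle this by computing the $j$-th column of $(\BS{w}_1,\dots,\BS{w}_1+\dots+\BS{w}_n)$ directly as $\sum_{i\le j}\BS{w}_i=\sum_{i\le j}\big(\BS{v}_i+(\BS{v}_1+\dots+\BS{v}_n)\big)=(\BS{v}_1+\dots+\BS{v}_j)+j\cdot\BS{0}\cdot$—wait, since $\BS{v}_1+\dots+\BS{v}_n=\BS{0}$ for $V\in\PLL^n$, this is just $\BS{v}_1+\dots+\BS{v}_j$. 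Hmm — that would make the composite $c(\UL{V}U)$, not $c(\UL{V}A)$. So in fact the cleanest route is: the relation $\BS{w}_i=\BS{v}_i-\BS{v}_{i-1}$ (with $\BS{v}_0$ the remaining vertex) must be re-expressed using $\BS{v}_0=-(\BS{v}_1+\dots+\BS{v}_n)$, which is what inserts the extra $+\BS{v}_1+\dots+\BS{v}_n$ into $\BS{w}_1$ only, producing the $2$ in the first row/column and hence the matrix $A$. I would write out this $n\times n$ matrix once explicitly, confirm it equals $A$, and then the rest is the formal cofactor manipulation above, which is entirely routine given Lemma \ref{cramer}.
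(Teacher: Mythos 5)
Your overall route is the paper's: identify the partial--sum matrix $(\BS{w}_1,\BS{w}_1+\BS{w}_2,\dots,\BS{w}_1+\cdots+\BS{w}_n)$ with $\UL{V}A$ and then run the cofactor identities of Lemma \ref{cramer}. The second half of your argument --- multiplicativity of $c$, $c\COMP c(M)=(\det M)^{n-2}M$, $A\,c(A)=(\det A)E$ (valid since $A$ is symmetric), and $\det A=n+1$ via the eigenvalues of $E+J$ --- is correct and essentially identical to the paper's computation. The first half, however, rests on two false assertions that you only half-repair. First, $\UL{\varphi}(\UL{V})=\UL{V}A$ is not true: $\UL{\varphi}(\UL{V})=(\BS{w}_1,\dots,\BS{w}_n)$ with $\BS{w}_i=\BS{v}_i-\BS{v}_{i-1}$, and only $\BS{w}_1=\BS{v}_1-\BS{v}_0$ involves the missing vertex $\BS{v}_0$; the correct identity is $\UL{\varphi}(\UL{V})\,U=\UL{V}A$ for the upper-triangular all-ones matrix $U$ (the displayed formula in Definition \ref{edge} actually computes $\BS{v}_i-\BS{v}_0=\BS{w}_1+\cdots+\BS{w}_i$, not $\BS{w}_i$). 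It is exactly this $U$ that cancels the $U$ coming from Proposition \ref{scalar}, giving $c(\UL{V}A)$ rather than your $c(\UL{V}AU)$; your fallback claim that ``$AU=A$ up to reindexing'' is also false. Second, ``$\BS{v}_1+\cdots+\BS{v}_n=\BS{0}$ for $V\in\PLL^n$'' is wrong: the loop condition is $\BS{v}_0+\cdots+\BS{v}_n=\BS{0}$ over all $n+1$ vectors, so $\BS{v}_1+\cdots+\BS{v}_n=-\BS{v}_0$, which is generally nonzero. This is precisely what makes the $j$-th partial sum $\BS{w}_1+\cdots+\BS{w}_j=\BS{v}_j-\BS{v}_0=\BS{v}_j+(\BS{v}_1+\cdots+\BS{v}_n)$ equal to the $j$-th column of $\UL{V}A$: every column, not just the first, acquires the all-ones contribution, which is where all the $2$'s on the diagonal of $A$ come from. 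With these two corrections your ``write out the matrix once explicitly'' step closes the argument exactly as in the paper; as literally written, though, the derivation of the first formula does not go through.
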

\begin{proof}
Using the formulae in Lemma \ref{cramer}, \ref{scalar}, 
we can show that these follow from simple calculations: 
\begin{gather*}
\UL{\psi}\COMP\UL{\varphi}(\UL{V})
=c(\BS{w}_1,\BS{w}_1+\BS{w}_2,\cdots,\BS{w}_1+\cdots+\BS{w}_n)
\\
=c(\BS{v}_1-\BS{v}_0,\BS{v}_2-\BS{v}_0,\dots,\BS{v}_n-\BS{v}_0)
\\
=c(\BS{v}_1+(\BS{v}_1+\dots+\BS{v}_n),
\BS{v}_2+(\BS{v}_1+\dots+\BS{v}_n),
\\
\dots,
\BS{v}_n+(\BS{v}_1+\dots+\BS{v}_n)
)
=
c(\UL{V}A)
,\end{gather*}
\begin{gather*}
(\UL{\psi}\COMP\UL{\varphi})^2(\UL{V})
=c(c(\UL{V}A)A)=c(c(\UL{V}))c(c(A))c(A)
\\
=\{(\det\UL{V})^{n-2}\cdot\UL{V}\}\{c(c(A))(c(A))^t\}
\\
=(\det\UL{V})^{n-2}\cdot\UL{V}\{\det(c(A))\cdot E\}
=(\det\UL{V})^{n-2}\det(c(A))\cdot\UL{V}
\\
=(\det A)^{n-1}(\det\UL{V})^{n-2}\cdot\UL{V}
=(n+1)^{n-1}(\det\UL{V})^{n-2}\cdot\UL{V}
,\end{gather*}
where $E$ denotes the $n\times n$ identity matrix. 
\end{proof}
This theorem means the following. 
For any $n$-simplex $V\in\PLL^n$, 
consider its facet loop $Z:=\psi\COMP\varphi(V)\in\PLL^n$ 
as a new vertex loop and take its facet loop 
$Z':=(\psi\COMP\varphi)^2(V)\in\PLL^n$. 
If $Z'$ is considered as 
a new vertex loop again, then it is a simplex similar to $V$. 
This situation is somewhat analogous to the following problem 
of Langr on quadrangles \cite{langr}, which is already solved 
affirmatively:
\vskip1.5ex

\noindent\textit{
E1085. The perpendicular bisectors of the sides of 
a quadrilateral Q form a quadrilateral Q1, 
and the perpendicular 
bisectors of the sides of Q1 form a quadrilateral Q2.
Show that Q2 is similar to Q and find the ratio of similitude.}
\vskip1.5ex
\noindent
Refer to \cite{agaoka} for the history of this problem. 

\section{Spaces of positive loops}\label{positive}
If $V\in\PLL_+^n$ (see Definition \ref{PLL}), 
the edge vector $\BS{w}_p$ and the facet normal $\BS{z}_p$ of  
$V$ satisfy $(\BS{w}_p\cdot \BS{z}_p)>0$ 
by Proposition \ref{scalar}. If the initial point of the vector 
$\BS{z}_p$ is put on the facet corresponding to 
${\rm P}_p$, its head is on the same side with 
${\rm P}_{p}$. This implies that $\BS{z}_p$ is 
oriented toward interior of $V$ (see fig.1). 
Thus the inequality above implies the following. 
\begin{proposition}\label{inward}
If $V\in\PLL_+^n$, then all the facet normals $\BS{z}_p$ 
of $V$ are  the inward ones. 
\end{proposition}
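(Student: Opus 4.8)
The plan is to turn the heuristic in the paragraph preceding the statement into a proof by first pinning down what ``inward'' should mean and then invoking the sign inequality $(\BS{w}_p\cdot\BS{z}_p)>0$. Fix $p$ with $0\le p\le n$ and let $H_p$ denote the hyperplane spanned by the facet of $V$ opposite ${\rm P}_p$. By Definition \ref{facet-normal} together with Lemma \ref{vector-product}(3), the vector $\BS{z}_p$ is orthogonal to the edge vectors $\BS{w}_0,\dots,\check{\BS{w}}_p,\check{\BS{w}}_{p+1},\dots,\BS{w}_n$ lying in that facet; for affine independent $V$ these $n-1$ vectors span the direction space of $H_p$ (they form a Hamiltonian path on the $n$ vertices of the facet), so $\BS{z}_p\perp H_p$ and $\BB{R}^n\setminus H_p$ splits into two open half-spaces, exactly one of which, call it $H_p^+=\{\BS{x}:(\BS{z}_p\cdot(\BS{x}-\BS{q}))>0\}$ (any $\BS{q}\in H_p$), is the one $\BS{z}_p$ points into. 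Declaring $\BS{z}_p$ the inward normal means that $H_p^+$ is the half-space containing the interior of $V$; since the interior of a non-degenerate simplex is the intersection of the open half-spaces each bounded by a facet hyperplane and containing the opposite vertex, this is equivalent to the single requirement ${\rm P}_p\in H_p^+$, i.e. $(\BS{z}_p\cdot(\BS{v}_p-\BS{q}))>0$ for some, hence every, $\BS{q}\in H_p$.

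Next I would exhibit one convenient point of $H_p$. Reading the indices modulo $n+1$, the vertex ${\rm P}_{p-1}$ is one of the vertices of the facet opposite ${\rm P}_p$, so $\BS{v}_{p-1}\in H_p$, and $\BS{v}_p-\BS{v}_{p-1}=\ORA{{\rm P}_{p-1}{\rm P}}_p=\BS{w}_p$ (this reads $\BS{w}_0=\BS{v}_0-\BS{v}_n$ when $p=0$, with the convention $\BS{w}_{n+1}=\BS{w}_0$ of Proposition \ref{scalar}). By Proposition \ref{scalar} we have $(\BS{w}_p\cdot\BS{z}_p)=\det(\UL{W})=(n+1)\det(\UL{V})$, and this is strictly positive exactly because $V\in\PLL_+^n$. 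Hence ${\rm P}_p\in H_p^+$, so $\BS{z}_p$ is the inward facet normal; as $p$ was arbitrary, all facet normals of $V$ are inward.

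I do not expect a genuine obstacle here. The only point needing a little care is the identification of ``$\BS{z}_p$ inward'' with ``${\rm P}_p$ lies on the $\BS{z}_p$-side of $H_p$'', which rests on the standard barycentric description of the interior of a simplex; once that is recorded, the choice $\BS{q}=\BS{v}_{p-1}$ makes the computation uniform in $p$ (covering $p=0$ and $p=n$ with no separate cases), and the positivity of $(\BS{w}_p\cdot\BS{z}_p)$ furnished by Proposition \ref{scalar} closes the argument immediately.
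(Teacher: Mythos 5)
Your proposal is correct and is essentially the paper's own argument: the paper justifies this proposition in the paragraph immediately preceding it, using exactly the positivity $(\BS{w}_p\cdot\BS{z}_p)=\det(\UL{W})=(n+1)\det(\UL{V})>0$ from Proposition \ref{scalar} together with the observation that $\BS{w}_p$ points from a vertex of the opposite facet toward ${\rm P}_p$. You have merely made precise the meaning of ``inward'' via half-spaces, which the paper leaves informal.
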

\begin{proposition}\label{loop2loop}
The maps $\varphi$ and $\psi$ preserve 
nondegeneracy and positivity. Hence, 
the maps  $\varphi$ and $\psi$ 
naturally induces 
${\varphi}_+:{\PLL}_+^n\longrightarrow{\PLL}_+^n$ and 
${\psi}_+:{\PLL}_+^n\longrightarrow{\PLL}_+^n$ 
respectively. 
(Note that positivity of $\psi(L)$ does not always imply 
positivity of $L$.) 
\end{proposition}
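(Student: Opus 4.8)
The plan is to verify the two claims of Proposition~\ref{loop2loop} separately, and then to observe that the induced maps are simply the restrictions. For the edge map $\varphi$, recall from the last equality of Proposition~\ref{scalar} that $\det(\UL{W}) = (n+1)\det(\UL{V})$. Since $n+1 > 0$, this single identity shows at once that $\det\UL{W} \ne 0$ iff $\det\UL{V} \ne 0$ and that $\det\UL{W} > 0$ iff $\det\UL{V} > 0$; hence $\varphi$ preserves both nondegeneracy and positivity, and (being bijective by Definition~\ref{edge}) restricts to a bijection $\varphi_+ : \PLL_+^n \to \PLL_+^n$.

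For the facet map $\psi$, the relevant identity is $\det\UL{Z} = (\det\UL{W})^{n-1}$, again from Proposition~\ref{scalar}. First, nondegeneracy: if $\det\UL{W}\ne 0$ then $(\det\UL{W})^{n-1}\ne 0$, so $\det\UL{Z}\ne 0$; this is what ``preserves nondegeneracy'' should mean here. For positivity I would split on the parity of $n$. If $n$ is odd, $n-1$ is even, so $(\det\UL{W})^{n-1} > 0$ whenever $\det\UL{W}\ne 0$, and in particular whenever $\det\UL{W} > 0$; if $n$ is even, $n-1$ is odd and $(\det\UL{W})^{n-1}$ has the same sign as $\det\UL{W}$. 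Either way, $\det\UL{W} > 0$ forces $\det\UL{Z} > 0$, so $\psi$ carries $\PLL_+^n$ into $\PLL_+^n$ and restricts to $\psi_+ : \PLL_+^n \to \PLL_+^n$. The parenthetical warning in the statement — that positivity of $\psi(L)$ need not imply positivity of $L$ — is exactly the failure of the converse in the odd-$n$ case, where $\det\UL{Z} > 0$ is consistent with $\det\UL{W} < 0$; no proof is needed for that remark, but it is worth noting that it is consistent with the computation.

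The one subtlety worth stating carefully is that the codomain of $\psi$ was enlarged to $\PLL^n$ only after Proposition~\ref{facetsformloop}, so before invoking ``$\psi_+ : \PLL_+^n \to \PLL_+^n$'' one should confirm that $\psi(W)$ indeed lies in $\PLL^n$ for $W$ positive — but positive loops are in particular nondegenerate simplices, so this is the easy (non-limiting) case of Proposition~\ref{facetsformloop}, where $\BS{w}_1,\dots,\BS{w}_n$ span $\BB{R}^n$ and the sum of the facet normals vanishes directly. I expect no real obstacle here: the whole proposition is a corollary of the two determinant identities already established in Proposition~\ref{scalar}, and the only thing requiring a moment's care is the parity bookkeeping for $\psi$ and keeping straight which direction of implication actually holds.
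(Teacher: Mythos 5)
Your proof is correct and follows essentially the same route as the paper, whose entire argument is ``This is obvious from Proposition \ref{scalar}'': both rest on the two determinant identities $\det\UL{W}=(n+1)\det\UL{V}$ and $\det\UL{Z}=(\det\UL{W})^{n-1}$, which you simply spell out (including the harmless parity bookkeeping and the remark on why the converse for $\psi$ can fail). No gap; you have merely made explicit what the paper leaves implicit.
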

\begin{proof}
This is obvious from Proposition \ref{scalar}.  
\unitlength=1mm
\begin{table}[h]
\begin{center}
\fbox{
\begin{picture}(116,16)
\put(108,-4.5){\scriptsize\textsf fig. 2}
\thicklines
\put(11,11){$\PLL_+^n$}
\put(13,7){\rotatebox{90}{$\in$}}
\put(12.5,4){$V$}
\put(5,0){vertex loop}
\put(24,13.6){$\varphi_+$: bijective}
\path(20.5,12)(51,12)
\path(49.5,12.8)(51,12)(49.5,11.2)
\path(20.5,5)(51,5)\path(20.5,4)(20.5,6)
\path(49.5,6)(51,5)(49.5,4)
\put(53.5,11){$\PLL_+^n$}
\put(56,7){\rotatebox{90}{$\in$}}
\put(55.5,4){$W$}
\put(51,0){edge loop}
\path(62,12)(94,12)
\path(62,5)(94,5)\path(62,4)(62,6)
\path(92.5,12.8)(94,12)(92.5,11.2)
\path(92.5,5.8)(94,5)(92.5,4.2)
\put(67,13.6){$\psi_+$: bijective?}
\put(96.5,11){$\PLL_+^n$}
\put(99,7){\rotatebox{90}{$\in$}}
\put(99,4){$Z$}
\put(94,0){facet loop}
\end{picture}
}
\end{center}
\end{table}
\end{proof}
\begin{corollary}\label{bijective}
The map ${\psi}_+$ is bijective. 
\end{corollary}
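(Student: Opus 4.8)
The plan is to reduce everything to two facts already in hand. First, $\varphi_+\colon\PLL_+^n\to\PLL_+^n$ is bijective: it is the restriction of the bijection $\varphi$ of Definition \ref{edge}, and it genuinely restricts to $\PLL_+^n$ because $\det\UL{W}=(n+1)\det\UL{V}$ by Proposition \ref{scalar}, so $\det\UL{V}>0\iff\det\UL{W}>0$. Second, the twofold iterate of $F:=\psi_+\COMP\varphi_+$ is a dilation by a positive factor, $F^2(V)=(n+1)^{n-1}(\det\UL{V})^{n-2}\cdot V$; this is Theorem \ref{iteration} read at the level of loops, which is legitimate because a loop is determined by its main part ($\BS{v}_0=-(\BS{v}_1+\cdots+\BS{v}_n)$) and $\UL{\varphi},\UL{\psi}$ are by definition the main‑part restrictions of $\varphi,\psi$. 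I will also use the homogeneity of $\psi$: since the vector product is $(n-1)$-linear (Lemma \ref{vector-product}), $\psi(tW)=t^{n-1}\psi(W)$ for $t>0$, and $\PLL_+^n$ is stable under positive dilations since $\det\UL{(tW)}=t^{n}\det\UL{W}$. Note $n-2\ge 0$ throughout (as $n\ge2$), so the exponents cause no trouble.

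For injectivity, suppose $\psi_+(W_1)=\psi_+(W_2)$. Using surjectivity of $\varphi_+$ write $W_i=\varphi_+(V_i)$, so $F(V_1)=F(V_2)$ and hence $F^2(V_1)=F^2(V_2)$. Theorem \ref{iteration} then gives $(\det\UL{V_1})^{n-2}\UL{V_1}=(\det\UL{V_2})^{n-2}\UL{V_2}$, with both coefficients positive, so $\UL{V_1}=\mu\,\UL{V_2}$ for some $\mu>0$. Taking determinants yields $\det\UL{V_1}=\mu^{n}\det\UL{V_2}$, and substituting back into the previous identity forces $\mu^{1+n(n-2)}=1$, i.e.\ $\mu=1$. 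Thus $\UL{V_1}=\UL{V_2}$, so $V_1=V_2$ and $W_1=W_2$.

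For surjectivity — the step carrying the real idea — fix $Z\in\PLL_+^n$ and set $c:=(n+1)^{n-1}(\det\UL{Z})^{n-2}>0$. Since $F$ maps $\PLL_+^n$ into itself (Proposition \ref{loop2loop}), the loop $W_0:=\varphi_+(F(Z))$ lies in $\PLL_+^n$ and $\psi_+(W_0)=F\bigl(F(Z)\bigr)=F^2(Z)=c\cdot Z$. Now rescale: with $t:=c^{-1/(n-1)}>0$ and $W:=tW_0\in\PLL_+^n$ we get $\psi_+(W)=t^{\,n-1}\psi_+(W_0)=c^{-1}\cdot(c\cdot Z)=Z$. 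Hence $\psi_+$ is onto, and combined with injectivity it is bijective. The only points demanding care are the passage between main-part identities and loop identities in Theorem \ref{iteration} and keeping the homogeneity exponent $n-1$ straight; everything else is bookkeeping with the structural maps $\varphi_+,\psi_+$.
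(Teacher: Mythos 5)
Your proof is correct and follows essentially the same route as the paper: injectivity and surjectivity are both extracted from the dilation identity $(\psi_+\COMP\varphi_+)^2(V)=(n+1)^{n-1}(\det\UL{V})^{n-2}\cdot V$ of Theorem \ref{iteration} together with a positive rescaling and the bijectivity of $\varphi_+$. The only (cosmetic) difference is that you rescale the edge loop $W_0$ using the degree-$(n-1)$ homogeneity of $\psi$ to exhibit an explicit $\psi_+$-preimage, whereas the paper rescales the vertex loop $V$ to make $(\psi_+\COMP\varphi_+)^2$ surjective and then deduces surjectivity of $\psi_+$ formally.
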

\begin{proof}
Suppose that 
$(\psi_+\COMP\varphi_+)^2(V)=(\psi_+\COMP\varphi_+)^2(V')$ for 
$V,\ V'\in\PLL_+^n$. Then we have 
$(\det\UL{V})^{n-2}\cdot\UL{V}=(\det\UL{V}')^{n-2}\cdot\UL{V}'$ 
by the last part of the proof of Theorem \ref{iteration}. 
This implies $\det\UL{V}=\det\UL{V}'$ by positivity and 
$\UL{V}=\UL{V}'$ follows. 
This proves that $(\psi_+\COMP\varphi_+)^2$ is injective. 
If $V\in\PLL_+^n$, we have 
$$
(\psi_+\COMP\varphi_+)^2(\UL{t\cdot V})
=
(n+1)^{n-1}t^{n^2-2n+1}(\det\UL{V})^{n-2}\cdot\UL{V}
\qquad (t>0)
.$$
Choosing $t$ with $(n+1)^{n-1}t^{n^2-2n+1}(\det\UL{V})^{n-2}=1$, 
we see that $(\psi_+\COMP\varphi_+)^2$ is surjective. 
Since $\varphi_+$ is bijective as well as $\varphi$, $\psi_+$ is also so. 
\end{proof}
\begin{remark}
\begin{enumerate}
\item
If $n$ is odd and if $\det\UL{V}<0$, we see that 
$(\psi\COMP\varphi)^2(-V)=(\psi\COMP\varphi)^2(V)$. 
Hence $\psi\COMP\varphi$ is not injective in this case. 
\item
The $n$-simplices $V$ and $(\psi_+\COMP\varphi_+)^2(V)$ 
are in the position of similarity with centre at the origin O. 
\item
For a $2$-simplex $W$, $\psi(W)$ is 
the rotation with angle $\pi/2$. 
\end{enumerate}
\end{remark}
\section{On properties 
$\protect{\rm{(a),\ (b),}}$ and $\protect{\rm{(c)}}.$}
Now let us reconfirm properties (a), (b) and (c) stated 
in Introduction in our context. 
Applying permutation of suffixes and translation, 
we may assume that simplices and loops have positive 
orientation and the simplices have centre at the origin O, 
namely, that they belong to $\PLL_+^n$. 
\vskip1.5ex

\noindent
\textit{Proof of} (a). 
This is just the assertion that the images of $\psi_+^n$ 
are loops, 
which is shown in Proposition \ref{facetsformloop}. 
\hfill$\Box$
\vskip1.5ex

\noindent
\textit{Proof of} (b). 
This is equivalent to say that $\psi_+^n$ is surjective, which 
is proved in Proposition \ref{bijective}. \hfill$\Box$
\vskip1.5ex

\noindent
\textit{Proof of} (c). 
The inequalities 
$2|\BS{z}_k| \le |\BS{z}_0|+\cdots+|\BS{z}_n|\ (0\le k\le n)$ 
are trivial consequence of triangle inequality in 
Euclidean spaces. 
If the equality holds for some $k$, we have 
$$
|\BS{z}_0+\cdots+\check{\BS{z}}_k+\cdots+\BS{z}_n|
=
|\BS{z}_0|+\cdots+|\BS{z}_{k-1}|+|\BS{z}_{k+1}|
+\cdots+|\BS{z}_n|
.$$
Then we can prove that all $\BS{z}_p$ $(p\neq k)$ have 
the same direction by induction, contradicting the assumption 
that $\BS{z}_0,\dots,\BS{z}_n$ span $\BB{R}^n$. \hfill$\Box$
\section{Sufficiency of simplex inequalities.}
Now we prove our main assertion (d) in Introduction: 
\vskip1.5ex

\emph{
If $z_0,\dots,z_n > 0$ $(n\ge 2)$ 
satisfy the simplex inequalities, 
there exist vectors $\BS{z}_0,\dots,\BS{z}_n$ with 
$\BS{z}_0+\dots+\BS{z}_n=\BS{0}$ and 
$|\BS{z}_0|=z_0,\dots,|\BS{z}_n|=z_n$ 
which generate $\BB{R}^n$. 
}
\begin{lemma}
Take an orthonormal coordinate system $x_1,\dots,x_n$ 
of $\BB{R}^n$. 
Let $\BB{R}^k\subset\BB{R}^n$ denote the subspace defined by 
$x_{k+1}=\cdots= x_n=0$. 
Suppose that 
\begin{gather*}
0<z_0\le\dots\le z_n,\quad  2z_n < z_0+\dots+z_n,
\quad
0 < |\alpha_2|,\dots,|\alpha_{n-1}| \le \pi
,\end{gather*} 
and ${\rm P}_n={\rm O}$, then there exist points 
${\rm P}_{k}\in\BB{R}^{k+1}\setminus\BB{R}^{k}$ 
$(0\le k\le n-1)$ such that the following holds. 
\begin{enumerate}
\item
$|{\rm P}_{k} - {\rm P}_{k-1}|=z_k\ 
(0\le k\le n,\ {\rm P}_{-1}:={\rm P}_n)$. 
\item
The angles between the triangles 
$\triangle{\rm P}_n{\rm P}_{k-1}{\rm P}_{k-2}$ and 
$\triangle{\rm P}_n{\rm P}_{k-1}{\rm P}_{k}$ are 
$\alpha_k$ $(2\le k\le n-1)$ 
(choosing the positive directions of angles 
$\alpha_k$ arbitrarily). 
\end{enumerate}
\end{lemma}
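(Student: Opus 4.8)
The plan is to construct the points $\mathrm{P}_k$ one at a time by induction on $k$, placing $\mathrm{P}_0,\dots,\mathrm{P}_{n-1}$ so that at each stage the partial piecewise-linear path stays inside the appropriate coordinate subspace and leaves enough ``slack'' in the remaining edge-lengths to close up the loop at the end. Concretely, after choosing $\mathrm{P}_0,\dots,\mathrm{P}_{k-1}$, the last edge $\BS{w}_0=\ORA{\mathrm{P}_{n}\mathrm{P}}_{0}$ must eventually have length $z_0$ and the chain from $\mathrm{P}_{k-1}$ back to $\mathrm{P}_n$ must traverse edges of lengths $z_k,\dots,z_n$; the geometric constraint to maintain is that the distance $|\mathrm{P}_{k-1}-\mathrm{P}_n|$ is small enough to be spanned by a path of $n-k+1$ segments of those lengths and large enough that the first of those segments can still be placed (i.e.\ a generalized triangle-inequality bracket). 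I would phrase this invariant as
\begin{gather*}
|z_k - (z_{k+1}+\cdots+z_n+z_0)| \;<\; |\mathrm{P}_{k-1}-\mathrm{P}_n| \;<\; z_k+z_{k+1}+\cdots+z_n+z_0 ,
\end{gather*}
adjusted at the ends, and check that the hypothesis $2z_n < z_0+\cdots+z_n$ (together with the ordering $z_0\le\cdots\le z_n$) makes the initial bracket nonempty and is exactly what is preserved.

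The inductive step itself is a two-dimensional construction performed inside the plane through $\mathrm{P}_n$, $\mathrm{P}_{k-1}$, and the new point $\mathrm{P}_k$: having fixed the dihedral angle $\alpha_k$ between the triangle $\triangle\mathrm{P}_n\mathrm{P}_{k-1}\mathrm{P}_{k-2}$ and the triangle $\triangle\mathrm{P}_n\mathrm{P}_{k-1}\mathrm{P}_k$, the point $\mathrm{P}_k$ is determined up to one remaining real parameter (its position on a circle of radius $z_k$ about $\mathrm{P}_{k-1}$ in the half-plane fixed by $\alpha_k$). First I would show that this parameter can be chosen so that $\mathrm{P}_k\in\BB{R}^{k+1}\setminus\BB{R}^{k}$, which is possible precisely because the prescribed angle $\alpha_k\neq 0$ forces $\mathrm{P}_k$ off the hyperplane $\BB{R}^k$ containing $\mathrm{P}_0,\dots,\mathrm{P}_{k-1}$; the condition $|\alpha_k|\le\pi$ is just the normalization that an angle makes sense. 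Then I would verify that among these admissible positions there is one making $|\mathrm{P}_k-\mathrm{P}_n|$ fall in the interval dictated by the invariant above — this is a continuity/intermediate-value argument, since as $\mathrm{P}_k$ runs over its circle the distance $|\mathrm{P}_k-\mathrm{P}_n|$ sweeps a full interval $[\,|\,|\mathrm{P}_{k-1}-\mathrm{P}_n|-z_k\,|,\ |\mathrm{P}_{k-1}-\mathrm{P}_n|+z_k\,]$, and one checks this interval meets the target bracket.

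For the final two steps, $k=n-1$ and the closing edges: once $\mathrm{P}_{n-1}$ is placed with $|\mathrm{P}_{n-1}-\mathrm{P}_n|$ lying strictly between $|z_0-z_n|$ and $z_0+z_n$, the two remaining edges $\ORA{\mathrm{P}_{n-1}\mathrm{P}}_{\!n}$... wait—$\mathrm{P}_n$ is already fixed at $\mathrm{O}$, so in fact the chain must close automatically: here I would instead run the induction so that the penultimate point leaves $\mathrm{P}_n=\mathrm{O}$ reachable by exactly the two segments of lengths $z_n$ (from $\mathrm{P}_{n-1}$) and $z_0$ (back to $\mathrm{P}_0$); the triangle inequality for the triple $(|\mathrm{P}_{n-1}-\mathrm{P}_0|,z_n,z_0)$ is what the invariant guarantees, and it lets us solve for the one missing vertex in the plane of $\mathrm{P}_0,\mathrm{P}_{n-1},\mathrm{P}_n$. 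The dimension bookkeeping then gives $\mathrm{P}_k\in\BB{R}^{k+1}\setminus\BB{R}^k$ for every $k$, hence the edge vectors $\BS{w}_1,\dots,\BS{w}_n$ are linearly independent and the resulting loop is non-degenerate.

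The main obstacle I expect is propagating the correct inequality invariant through the induction: the naive bracket $|z_k - \sum_{j>k}z_j|$ can fail to be preserved without using the ordering $z_0\le\cdots\le z_n$ and the single strict hypothesis $2z_n<\sum z_j$ in an essential, slightly delicate way, and one must be careful that the ``$<$'' stays strict (so that the constructed point genuinely escapes $\BB{R}^k$ and the final simplex is non-degenerate rather than flat). A secondary technical point is the freedom in choosing the positive sense of each $\alpha_k$ and reconciling the $2$-dimensional sub-constructions with the global orthonormal frame $x_1,\dots,x_n$; I would handle this by always working in the $2$-plane $\Span(\ORA{\mathrm{P}_n\mathrm{P}}_{k-1},\BS{e}_{k+1})$ after an orthogonal change of coordinates fixing $\BB{R}^k$, so that ``$\mathrm{P}_k\notin\BB{R}^k$'' becomes simply ``the $\BS{e}_{k+1}$-component of $\mathrm{P}_k$ is nonzero.''
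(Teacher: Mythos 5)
Your overall strategy---controlling the distance from each partial endpoint back to $\mathrm{O}=\mathrm{P}_n$ so that the remaining edges can still close the loop---is at bottom the same mechanism as the paper's proof, which runs an induction on the dimension $n$: it replaces the pair $z_{n-1},z_n$ by a single auxiliary closing length $z_{n-1}'$ (which is exactly your quantity $|\mathrm{P}_{n-2}-\mathrm{P}_n|$), checks by an elementary interval argument that $z_{n-1}'$ can be chosen so that $z_0,\dots,z_{n-2},z_{n-1}'$ again satisfy the hypotheses of the lemma in $\BB{R}^{n-1}$ while $z_{n-1},z_{n-1}',z_n$ satisfy the triangle inequalities, and then attaches the last vertex by one planar construction. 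The difference is that the paper fixes all these closing lengths numerically before doing any geometry, whereas you want to realize them one at a time by an intermediate-value argument.

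There is, however, a genuine gap, and it sits exactly where the content of the lemma lies. The invariant you propose is not the right one, and you never verify that it propagates. After $\mathrm{P}_{k-1}$ is placed, the edges still to be used have lengths $z_k,\dots,z_n$ only (the edge of length $z_0$ joins $\mathrm{P}_n$ to $\mathrm{P}_0$ and has already been spent), so the correct bracket is the polygon inequality for the closed chain with sides $|\mathrm{P}_{k-1}-\mathrm{P}_n|,z_k,\dots,z_n$; since $z_n$ is the largest of these, the binding constraints are $z_n-(z_k+\cdots+z_{n-1})<|\mathrm{P}_{k-1}-\mathrm{P}_n|<z_k+\cdots+z_n$, not the $z_k$-centred, $z_0$-containing expression you wrote. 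Proving that this bracket is nonempty to begin with (this is the one place where $2z_n<z_0+\cdots+z_n$ enters), that each step can land inside it, and that strictness survives so that $\mathrm{P}_k$ genuinely leaves $\BB{R}^k$, is the entire proof; you explicitly defer it as ``the main obstacle,'' so the argument is not complete. Your closing step is also garbled: the final triangle has sides $|\mathrm{P}_{n-2}-\mathrm{P}_n|$, $z_{n-1}$, $z_n$ and involves neither $z_0$ nor $\mathrm{P}_0$. Finally, when you prescribe the dihedral angle $\alpha_k$ along the edge $\mathrm{P}_n\mathrm{P}_{k-1}$ you must specify the three-dimensional subspace (spanned by that edge, the direction to $\mathrm{P}_{k-2}$, and $\BS{e}_{k+1}$) in which the rotation is performed, and then observe that sliding $\mathrm{P}_k$ along the circle of radius $z_k$ about $\mathrm{P}_{k-1}$ inside the resulting half-plane preserves the angle while sweeping the distance $|\mathrm{P}_k-\mathrm{P}_n|$ over a full interval; the two-plane $\Span(\ORA{\mathrm{P}_n\mathrm{P}}_{k-1},\BS{e}_{k+1})$ you name does not contain $\mathrm{P}_{k-2}$ and so does not by itself encode the angle condition.
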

This lemma is sufficient to prove (d). 
For, we may assume that the sequence $\{z_k\}$ is 
non-decreasing by a permutation of vertices, and 
if we put $\BS{z}_k:=\ORA{{\rm P}_{k-1}{\rm P}}_k$, 
we have $\BS{z}_0+\dots+\BS{z}_n=\BS{0}$. 
Since ${\rm P}_{k}\in\BB{R}_{k+1}\setminus\BB{R}_{k}$ 
$(1\le k\le n-1)$, $\BS{z}_0,\dots,\BS{z}_n$ 
generate $\BB{R}^n$. 
If $\det(\UL{Z})<0$ ($Z:=(\BS{z}_0,\dots,\BS{z}_n)$), 
reverse the positive direction of 
$\alpha_{n-1}$ and then we have 
$Z=(\BS{z}_0,\dots,\BS{z}_n)\in\PLL_+^n$ with $|\BS{z}_k|=z_k$. 
Furthermore, since $\alpha_i$ assures degrees of freedom
$n-2$, this lemma implies that there are an infinite number of
essentially different solutions for $n\ge 3$. 
\begin{proof} 
Let us prove by induction on $n$. 
The case $n=2$ is elementary (but not trivial). 
Suppose that we have proved till the case of $n-1$ $(n\ge 3)$. 

If $z_{n-2}<z_n$, we have 
$$
\max\{z_{n-2},z_n-z_{n-1}\}<\min\{z_0+\dots+z_{n-2},z_n\}
.$$ 
This assures the existence of $z_{n-1}'\in\BB{R}$ with
$$
z_{n-2}< z_{n-1}'< z_0+\dots+z_{n-2},
\quad z_n-z_{n-1}<z_{n-1}'< z_n
.$$
If $z_{n-2}=z_{n-1}=z_n$, 
we put $z_{n-1}':=z_{n-2}=z_{n-1}=z_n$. 
In both of these cases, we have 
\begin{gather*}
0 < z_0\le\dots\le z_{n-2}\le z_{n-1}',\quad
2z_{n-1}' < z_0+\dots+z_{n-2}+z_{n-1}',\\
z_n < z_{n-1}+z_{n-1}',\quad 
z_{n-1} < z_{n-1}'+z_n,\quad z_{n-1}'<z_{n-1}+z_n.
\end{gather*}
By the first two inequalities, 
$z_0,\dots,z_{n-2}, z_{n-1}'$ 
satisfies the inductive assumption. 
Hence there exist points 
${\rm P}_0,{\rm P}_1,\dots,{\rm P}_{n-2},
{\rm P}'_{n-1}:={\rm O}$ such that 
\begin{gather*}
{\rm P}_{k}\in\BB{R}^{k+1}\setminus\BB{R}^{k}\ (1\le k\le n-2),
\\
|{\rm P}_{k} - {\rm P}_{k-1}|=z_k\ (0\le k\le n-2),\quad 
|{\rm P}'_{n-1} - {\rm P}_{n-2}|=z_{n-1}',
\end{gather*}
the angles between 
$\triangle{\rm P}'_{n-1}{\rm P}_{k-1}{\rm P}_{k-2}$ and 
$\triangle{\rm P}'_{n-1}{\rm P}_{k-1}{\rm P}_k$ are 
$\alpha_k$ $(2\le k\le n-2)$, where ${\rm P}_{-1}:={\rm O}$. 
\unitlength=1mm
\begin{table}[h]
\begin{center}
\begin{picture}(53,48)(-7,0)
\put(22,0){\scriptsize\textsf fig. 3}
\dottedline{1}(5,30)(10,20)(21,10)(36,5)
\path(5,30)(36,5)
\path(36,5)(48,22)
\path(48,22)(5,30)
\path(5,30)(48,46)(48,22)
\HEAD{10}{20}{295}
\WHEAD{5.3}{29.8}{165}
\HEAD{21}{10}{316}
\HEAD{36}{5}{347}
\HEAD{48}{22}{60}
\HEAD{48}{46}{88}
\HEAD{5}{30}{200}
\put(2,24){$\BS{z}_{0}$}
\put(28,22){$\BS{z}_{n-1}'$}
\put(23,40){$\BS{z}_{n}$}
\put(49,34){$\BS{z}_{n-1}$}
\put(43,13){$\BS{z}_{n-2}$}
\put(-23,29){P${}'_{n-1}=\rm{O}\ (=\rm{P}{}_n)$}
\put(3,19){P$_0$}
\put(38,5){P$_{n-3}$}
\put(49,20){P$_{n-2}$}
\put(49,45){P${}_{n-1}$}
\end{picture}
\end{center}
\end{table}
Since $z_{n-1}$, $z_{n-1}'$, $z_n$ satisfies 
the triangle inequalities, 
there exists P${}_{n-1}\in\BB{R}^n\setminus\BB{R}^{n-1}$ 
such that 
$|{\rm P}_{n-1} - {\rm P}_{n-2}|=z_{n-1}$, 
$|{\rm P}'_{n-1} - {\rm P}_{n-1}|=z_n$ 
and the angle between 
$\triangle{\rm P}'_{n-1}{\rm P}_{n-2}{\rm P}_{n-1}$ and 
$\triangle{\rm P}'_{n-1}{\rm P}_{n-2}{\rm P}_{n-3}$ 
is $\alpha_{n-1}$. 
Renaming ${\rm P}'_{n-1}$ to ${\rm P}_n$, we have the desired 
points. 
\end{proof}
\subsection*{Acknowledgement.}
The author wishes to express thanks to Yoshio Agaoka for 
much bibliographical information. Furthermore he talked about 
Langr's problem, which gave the author a nice hint.  

\bibliographystyle{amsplain}

\end{document}